\newtheorem{remark}{Remark}[section]
\newtheorem{theorem}{Theorem}[section]
\newtheorem{proposition}{Proposition}[section]
\newproof{proof}{Proof}
\newproof{dem-final}{Proof of Lemma \ref{lema-multiplicador}}
\newtheorem{notation}{Notation}
\def\d{\hbox{d}}
\begin{document}
\begin{frontmatter}
\title{Behavior of the plastic deformation of an elasto-perfectly-plastic oscillator with noise.\footnote{This research was partially supported by a grant from CEA, Commissariat \`a l'\'energie atomique and by the National Science Foundation under grant DMS-0705247. A large part of this work was completed while one of the authors was visiting the University of Texas at Dallas and the Hong-Kong Polytechnic University. We wish to thank warmly these institutions for the hospitality and support.}}


\selectlanguage{english}
\author[authorlabel1]{Alain Bensoussan},
\ead{alain.bensoussan@utdallas.edu}
\author[authorlabel2]{Laurent Mertz}
\ead{mertz@ann.jussieu.fr}

\address[authorlabel1]{International Center for Decision and Risk Analysis, School of Management, University of Texas at Dallas, Box 830688, Richardson, Texas 75083-0688,
Graduate School of Business, the Hong Kong Polytechnic University,
Graduate Department of Financial Engineering, Ajou University.
This research in the paper was supported by WCU (World Class University) program through the National Research Foundation of Korea funded by the Ministry of Education, Science and Technology (R31 - 20007).}
\address[authorlabel2]{Universit\'e Pierre et Marie Curie-Paris 6,
Laboratoire Jaques Louis Lions, 4 place jussieu 75005 Paris.}
\begin{abstract}
{Earlier works in engineering, partly experimental, partly computational have revealed that asymptotically, when the excitation is a white noise, plastic deformation and total deformation for an elasto-perfectly-plastic oscillator have a variance which increases linearly with time with the same coefficient. 
In this work, we prove this result and we characterize the corresponding drift coefficient.
Our study relies on a stochastic variational inequality governing the evolution between the velocity of the oscillator and the non-linear restoring force. 
We then define long cycles behavior of the Markov process solution of the stochastic variational inequality which is the key concept to obtain the result.
An important question in engineering is to compute this coefficient.
Also, we provide numerical simulations which show successful agreement with our theoretical prediction and previous empirical studies made by engineers.

\vskip 0.5\baselineskip
\noindent{\bf R\'esum\'e}
\vskip 0.5\baselineskip

{\bf Le comportement de la d\'eformation plastique pour un oscillateur \'elastique-parfait\-ement-plastique excit\'e par un bruit blanc.}
Des r\'esultats exp\'erimentaux en sciences de l'ing\'enieur ont montr\'e que, pour un oscillateur \'elasto-plastique-parfait excit\'e par un bruit blanc, la d\'eformation plastique et la d\'eformation totale ont une variance, qui asymptotiquement, cro\^it lin\'eairement avec le temps avec le m\^eme coefficient.
Dans ce travail, nous prouvons ce r\'esultat et nous caract\'erisons le coefficient de d\'erive.
Notre \'etude repose sur une in\'equation variationnelle stochastique gouvernant l'\'evolution entre la vitesse de l'oscillateur et la force de rappel non-lin\'eaire. 
Nous d\'efinissons alors le comportement en cycles longs du processus de Markov solution de l'in\'equation variationnelle stochastique qui est le concept cl\'e pour obtenir le r\'esultat. Une question importante en sciences de l'ing\'enieur est de calculer ce coefficient.
Les r\'esultats num\'eriques confirment avec succ\`es notre pr\'ediction th\'eorique et les \'etudes empiriques faites par les ing\'enieurs.}
\end{abstract}

\end{frontmatter}

\section*{Version fran\c{c}aise abr\'eg\'ee}
Dans cet article, nous \'etudions la variance de l'oscillateur \'elastique-parfaitement-plastique (EPP) excit\'e par un bruit blanc. La dynamique de l'oscillateur s'exprime \`a l'aide d'une \'equation \`a m\'emoire. (voir \eqref{chap3:hysteresis1}-\eqref{chap3:hysteresis2}). A.Bensoussan and J.Turi  ont montr\'e que la relation entre la vitesse et la composante \'elastique satisfait une in\'equation variationnelle stochastique (voir \eqref{chap3:svi}). Dans ce cadre, nous introduisons les cycles long ind\'ependants (d\'efinis plus loin) et nous justifions qu'ils permettent de caract\'eriser la variance de la d\'eformation totale et de la d\'eformation plastique (voir \eqref{chap3:sigma}). 
\selectlanguage{english}
\section{Introduction} 
In civil engineering, an elasto-perfectly-plastic (EPP) oscillator with one single degree of freedom is employed to estimate prediction of failure of mechanical structures under random vibrations.
This elasto-plastic oscillator consists in a one dimensional model simple and representative of the elasto-plastic behavior of a class of structure dominated by their first mode of vibration. The main difficulty to study these systems comes from a frequent occurrence of plastic phases on small intervals of time. A plastic deformation is produced when the stress of the structure crosses over an elastic limit. The dynamics of the EPP-oscillator has memory, so it has been formulated in the engineering literature as a process with hysteresis $x(t)$, which stands for the displacement of the oscillator. We study the problem
\begin{equation}
\label{chap3:hysteresis1}
\ddot{x} + c_0 \dot{x} + \mathbf{F}_t = \dot{w}
\end{equation}
with initial conditions of displacement and velocity $x(0) = 0, \quad \dot{x}(0) = 0$.
Here $c_0>0$ is the viscous damping coefficient, $k>0$ the stiffness, $w$ is a Wiener process; $\mathbf{F}_t := \mathbf{F}(x(s), 0 \leq s \leq t)$ is a non-linear functional which depends on the entire trajectory $\{ x(s), 0 \leq s \leq t\}$ up to time $t$. The plastic deformation denoted by $\Delta(t)$ at time $t$ can be recovered from the pair $(x(t),\mathbf{F}_t)$ by the following relationship:
\begin{equation}\label{chap3:hysteresis2}
\mathbf{F}_t = 
\left\{
\begin{array}{rcl}
k Y & if & x(t) = Y + \Delta(t),\\
k (x(t) - \Delta(t)) & if & x(t)  \in ]-Y+ \Delta(t),Y+ \Delta(t)[,\\
-k Y & if & x(t) = -Y + \Delta(t).\\
\end{array}
\right.
\end{equation}
where $\Delta(t) = \int_0^t y(s)\mathbf{1}_{\{ \vert \mathbf{F}_t \vert = kY\}} \d s $ and $Y$ is an elasto-plastic bound.
Karnopp \& Scharton \cite{KarSchar} proposed a separation between elastic states and plastic states. They introduced a fictitious variable $z(t) := x(t) - \Delta(t)$ and noticed the simple fact that between two plastic phases $z(t)$ behaves like a linear oscillator.
In addition, other works made by engineers \cite{BorLab}, partly experimental, partly computational, have revealed that total deformation has a variance which increases linearly with time:  
\begin{equation}\label{chap3:sigma2}
\lim_{t \to \infty } \frac{\sigma^2(x(t))}{t} = \sigma^2. 
\end{equation}
where $\sigma^2$ is a positive real number.\\

Recently, the right mathematical framework of stochastic variational inequalities (SVI) modeling an elasto-plastic oscillator with noise (presented below) has been introduced by the first author with J. Turi in \cite{BenTuri1}. The inequality governs the relationship between the velocity $y(t)$ and the elastic component $z(t)$:  
\begin{equation}\label{chap3:svi}\tag{$\mathcal{SVI}$}
\d y(t)  = -(c_0 y(t) + k z(t)) \d t + \d w(t), \quad (\d z(t)-y(t) \d t)(\phi - z(t)) \geq 0, \quad  \forall \vert \phi \vert \leq Y, \quad \vert z(t) \vert \leq Y.
\end{equation}
The plastic deformation $\Delta(t)$ can be recovered by $\int_0^t y(s)\mathbf{1}_{\{ \vert z(s) \vert =Y\}}\d s$.\\

Throughout the paper, the objective is to prove \eqref{chap3:sigma2} and to provide a exact characterization of \eqref{chap3:sigma2} based on \eqref{chap3:svi}. Now, we introduce \textit{long cycles} in view to the fact that we are interested in identifying independent sequences in the trajectory.

\subsection{Long cycles}
Denote $\tau_0 := \inf \{ t > 0, \quad y(t) = 0 \quad \mbox{and} \quad \vert z(t) \vert = Y \}$ and $s:=\textup{sign}(z(\tau_0))$ which labels the first boundary hit by the process $(y(t),z(t))$. Define
$\theta_0 := \inf \{ t > \tau_0, \quad y(t) = 0 \quad \mbox{and} \quad z(t) = -s Y \}$.
In a recurrent manner, knowing $\theta_n$, we can define for $n \geq 0$
\begin{align*}
\tau_{n+1} & := \inf \{ t > \theta_n, \quad y(t) = 0 \quad \mbox{and} \quad z(t) = s Y \},\\
\theta_{n+1} & := \inf \{ t > \tau_{n+1}, \quad y(t) = 0 \quad \mbox{and} \quad z(t) = -s Y \}.
\end{align*}
Now, according to the previous setting we can define the $n$-\textit{th} long cycle (resp. first part of the cycle, second part of the cycle) as the piece of trajectory delimited by the interval $[\tau_{n},\tau_{n+1})$, (resp. $[\tau_{n},\theta_{n+1})$ and $[\theta_{n+1},\tau_{n+1})$). 
Indeed, at the instants $\{ \tau_{n}, n \geq 1 \}$, the process $(y(t),z(t))$ is in the same state at the instant $\tau_0$. In addition, there are two types of cycles depending on the sign of $s$. The set of stopping times $\{ \tau_{n}, n \geq 0 \}$ represents the times of occurence of long cycles. Let us remark that the plastic deformation and the total deformation are the same on a long cycle since $\int_{\tau_0}^{\tau_1} y(t)\d t = \int_{\tau_0}^{\tau_1} y(t)\mathbf{1}_{\{ \vert z(s) \vert =Y\}} \d t  + \int_{\tau_0}^{\tau_1} y(t)\mathbf{1}_{\{ \vert z(s) \vert <Y\}} \d t$ and that 
$\int_{\tau_0}^{\tau_1} y(t)\mathbf{1}_{\{ \vert z(s) \vert <Y\}} \d t = z(\tau_1) - z(\tau_0) = 0$.
As main result, we have obtained the following theorem.
\begin{theorem}[Characterization of the variance related to the plastic/total deformation]
In the previously defined context, we have shown
\begin{equation}\label{chap3:sigma}
\lim_{t \to \infty } \frac{\sigma^2(x(t))}{t} = \frac{\mathbb{E} \left ( \int_{\tau_0}^{\tau_1} y(t) \d t \right)^2}{\mathbb{E} (\tau_1 - \tau_0)}.
\end{equation}
\end{theorem}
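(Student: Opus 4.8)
The plan is to expose the regenerative (renewal--reward) structure hidden in the long cycles and reduce the statement to a classical second--moment renewal computation. First I would record that, since the velocity is $y=\dot x$, the total deformation is $x(t)=\int_0^t y(s)\,\d s$, so the increment of $x$ across the $n$-th long cycle is exactly the reward $X_n:=\int_{\tau_n}^{\tau_{n+1}} y(t)\,\d t$, with cycle length $T_n:=\tau_{n+1}-\tau_n$. Because the process $(y,z)$ solving \eqref{chap3:svi} is strong Markov and sits in the \emph{identical} state $(0,sY)$ at every $\tau_n$, applying the strong Markov property at these stopping times shows that the pairs $\{(X_n,T_n)\}_{n\ge 0}$ are i.i.d. This is the backbone: up to boundary corrections, $x$ is a cumulative process driven by these i.i.d. cycles.

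Next I would establish the mean--zero property $\mathbb{E}[X_n]=0$ using the reflection symmetry of \eqref{chap3:svi}: the map $(y,z)\mapsto(-y,-z)$ together with $w\mapsto -w$ leaves the SVI invariant (the drift $-(c_0y+kz)$ and the constraint $|z|\le Y$ are odd, and $-w$ is again a Wiener process). Since the first half--cycle $[\tau_n,\theta_{n+1})$ runs from $(0,sY)$ to $(0,-sY)$ and the second half $[\theta_{n+1},\tau_{n+1})$ from $(0,-sY)$ back to $(0,sY)$, this symmetry makes the second half--integral equal in law to the negative of an independent copy of the first, so the two contributions cancel in expectation. This is precisely why the announced constant features $\mathbb{E}[X_1^2]$ rather than a centered variance.

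With the i.i.d. structure and $\mathbb{E}[X_n]=0$ in hand, I would set $N(t):=\#\{n\ge 0:\ \tau_n\le t\}$ and write $x(t)=x(\tau_0)+\sum_{n=0}^{N(t)-1}X_n+\int_{\tau_{N(t)}}^{t} y(s)\,\d s$. Letting $\nu(t)$ be the first cycle index strictly beyond $t$ (a stopping time for the discrete filtration generated by the cycles), Wald's first and second identities give $\mathbb{E}\big[\sum_{n=0}^{\nu(t)-1}X_n\big]=0$ and $\mathbb{E}\big[\big(\sum_{n=0}^{\nu(t)-1}X_n\big)^2\big]=\mathbb{E}[\nu(t)]\,\mathbb{E}[X_1^2]$; combined with the elementary renewal theorem $\mathbb{E}[N(t)]/t\to 1/\mathbb{E}[T_1]$, this produces the leading term $\tfrac{\mathbb{E}[X_1^2]}{\mathbb{E}[T_1]}\,t$. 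It then remains to show that the three corrections --- the initial segment $x(\tau_0)$, the last completed reward $X_{N(t)}$, and the unfinished residual $\int_{\tau_{N(t)}}^{t} y\,\d s$ --- are $o(\sqrt t)$ in $L^2$ and that $\mathbb{E}[x(t)]$ stays bounded, so that $\sigma^2(x(t))=\mathbb{E}[x(t)^2]-(\mathbb{E}[x(t)])^2$ has the asserted rate.

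The main obstacle is the integrability and recurrence input underpinning all of the above: one must prove $0<\mathbb{E}[T_1]<\infty$ and $\mathbb{E}[X_1^2]<\infty$, together with tail control on $T_1$ (for instance $\mathbb{E}[T_1^2]<\infty$) strong enough to bound the overshoot residual in $L^2$. These rest on positive recurrence of $(y,z)$ toward the regeneration state $(0,sY)$ with well--behaved return--time tails, which I would derive from the ergodic properties and hitting--time estimates for the SVI process; this is the genuinely technical heart of the argument. Finally, I would observe that the plastic deformation obeys the same limit, since $z$ returns to $sY$ at each $\tau_n$, so the net elastic contribution over a full cycle vanishes and $\Delta$ and $x$ have identical cycle increments.
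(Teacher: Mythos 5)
Your skeleton is the same as the paper's: reduce to $\sigma^2(x(t))=\mathbb{E}[x(t)^2]$ by the $(y,z)\mapsto(-y,-z)$ symmetry, observe that the long cycles started at the regeneration state $(0,sY)$ are i.i.d.\ with mean-zero reward $X_n=\int_{\tau_n}^{\tau_{n+1}}y(t)\,\d t$, apply Wald's identities at the renewal index $N_T+1$, and sandwich $\frac{1}{T}\mathbb{E}\bigl(\int_0^{\tau_{N_T+1}}y\,\d t\bigr)^2$ between $\frac{\mathbb{E}(\int_0^{\tau_1}y\,\d t)^2}{\mathbb{E}\tau_1}$ and $\bigl(1+\frac{\mathbb{E}\tau_1}{T}\bigr)\frac{\mathbb{E}(\int_0^{\tau_1}y\,\d t)^2}{\mathbb{E}\tau_1}$ to control the residual. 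Your additional remarks (independence of the two half-cycles via the strong Markov property at $\theta_1$, and the identity of plastic and total deformation over a cycle because $z(\tau_1)=z(\tau_0)$) are also exactly what the paper uses to pass from \eqref{chap3:sigma} to the simplified formula \eqref{chap3:last}.

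The genuine divergence is in how the quantitative inputs $0<\mathbb{E}\tau_1<\infty$, $\mathbb{E}\bigl(\int_0^{\tau_1}y\,\d t\bigr)^2<\infty$ and $\mathbb{E}\int_0^{\tau_1}y\,\d t=0$ are obtained, and this is where your proposal stops at a promissory note (``ergodic properties and hitting-time estimates''). The paper's actual mechanism is analytic: it characterizes the cycle functionals as solutions of the nonlocal boundary-value problems \eqref{chap3:vplus}--\eqref{chap3:vmoins}, yielding $\mathbb{E}\tau_1=2v^+(0,Y;1)$ and $\mathbb{E}\int_0^{\tau_1}y\,\d t=v^+(0,Y;y)+v^-(0,-Y;y)=0$ by the antisymmetry $v^+(0,Y;f)=-v^-(0,-Y;f)$ for odd $f$. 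Two points deserve emphasis because they are not automatic in your scheme. First, solvability of the nonlocal problem rests on the strict positivity $\pi^-(0^-,Y)>0$ (the denominator in \eqref{chap3:e17}), i.e.\ on the fact that from $(0^-,Y)$ the process reaches the opposite boundary with positive probability; the paper devotes a separate proposition and a monotonicity/maximum-principle argument to this, and without it the half-cycle need not terminate and $\mathbb{E}\theta_1$ need not be finite. Second, the moments you need involve $f=y$ and $f=y\otimes y$, which are unbounded, so even granting positive recurrence one must show the hitting-time functionals are integrable against these weights; the paper's explicit representation \eqref{chap3:e12} and the decomposition $v^\pm=\eta+\psi_++\psi_-+(\cdots)\pi^\pm$ are what make this checkable. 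If you intend a purely probabilistic route you would need a Lyapunov-type drift estimate giving, say, exponential moments of $\tau_1$ and square-integrability of $\sup_{t\le\tau_1}|y(t)|$; absent that, the renewal-reward argument is a correct reduction but not yet a proof.
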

Our proofs are based on solving nonlocal partial differential equations related to long cycles. Simpler formula will be given below at equation \eqref{chap3:last}.
\section{The issue of long cycles and plastic deformations}
Let us introduce notations.
\begin{notation}
$D := \mathbb{R} \times (-Y,+Y), \quad D^+ := (0,\infty) \times \{Y \}, \quad D^- := (-\infty,0) \times \{-Y \}$, 
and the differential operators
$A \zeta :=    -\frac{1}{2} \zeta_{yy} + (c_0y+kz)  \zeta_{y}  - y \zeta_z, \quad B_+\zeta :=  -\frac{1}{2} \zeta_{yy} + (c_0y + kY)  \zeta_y, \quad 
B_-\zeta :=   -\frac{1}{2} \zeta_{yy} + (c_0y - kY)  \zeta_y.$
where $\zeta$ is a regular function on $D$.
\end{notation}
Let $f$ be a bounded measurable function, we want to solve
\begin{equation}\tag{$P_{v^+}$}
A v^+    =  f(y,z)  \quad  \textup{in} \quad D, \quad 
B_+ v^+   =  f(y,Y)  \quad  \textup{in} \quad D^+, \quad 
B _- v^+   =  f(y,-Y) \quad \textup{in} \quad D^-
 \label{chap3:vplus}
\end{equation}
with the nonlocal boundary conditions $v^+(y,Y)$ continuous and $v^+(0^-,-Y) =0$,
and 
\begin{equation}\tag{$P_{v^-}$}
A v^-     =  f(y,z)  \quad  \textup{in} \quad D, \quad 
B_+ v^-   =  f(y,Y)  \quad  \textup{in} \quad D^+, \quad 
B _- v^-    =  f(y,-Y) \quad \textup{in} \quad D^-
 \label{chap3:vmoins}
\end{equation}
with the nonlocal boundary conditions $v^-(0^+,Y)=0$ and $v^-(y,-Y)$ continuous.
The functions $v^+(y,z;f)$ and $v^-(y,z;f)$ are called \underline{long cycles}.
In addition, we define $\pi^+(y,z)$ and $\pi^-(y,z)$ by
\begin{equation}\tag{$P_{\pi^+}$}
A \pi^+    =  0  \quad  \textup{in} \quad  D,\quad
\pi^+(y,Y)   =  1  \quad \textup{in} \quad D^+,\quad 
\pi^+(y,-Y)   =  0 \quad \textup{in} \quad D^-
\label{chap3:piplus}
\end{equation}
and
\begin{equation}\tag{$P_{\pi^-}$}
A \pi^-   =  0  \quad \textup{in} \quad D,\quad
\pi^-(y,Y)   =  0  \quad \textup{in} \quad  D^+,\quad
\pi^-(y,-Y)  =  1  \quad \textup{in} \quad D^-
\label{chap3:pimoins}
\end{equation}
Note that  $\pi^+(y,z) + \pi^-(y,z) =1$, so the existence and uniqueness of a bounded solution \eqref{chap3:piplus} and \eqref{chap3:pimoins} is clear. 
\begin{proposition}
We have the properties $\pi^-(0^+,-Y)>0$ and $\pi^+(0^-,Y)>0$.
\end{proposition}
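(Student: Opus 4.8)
The plan is to exploit the probabilistic meaning of $\pi^{\pm}$. Since $A$ is (minus) the infinitesimal generator of the interior diffusion $(y(t),z(t))$ governed by \eqref{chap3:svi}, a bounded solution of \eqref{chap3:pimoins} is, by Dynkin's formula and optional stopping, the hitting probability
\[
\pi^-(y,z) = \mathbb{P}_{(y,z)}\left( (y,z)\ \text{reaches}\ D^-\ \text{before}\ D^+ \right),
\]
and likewise $\pi^+$ is the probability of reaching $D^+$ before $D^-$. The point $(0^+,-Y)$ is the state at which the process is released from the lower plastic boundary $\{z=-Y\}$ into $D$: for $y=0^+$ one has $\dot z = y>0$, so the trajectory immediately enters $D$. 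Reading the boundary data in the limit, $\pi^-(0^+,-Y)$ is exactly the probability that, after such a release, the trajectory returns to $D^-$ (i.e. hits $\{z=-Y\}$ again, necessarily with $y<0$) before it ever reaches $D^+$. The claim thus amounts to showing this return event has positive probability.

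First I would reduce the two assertions to one by symmetry. The change of variables $(y,z)\mapsto(-y,-z)$ leaves the law of \eqref{chap3:svi} invariant (the drift $-(c_0y+kz)$ is odd, the relation $\dot z=y$ is preserved, and $-w$ is again a Wiener process) and interchanges $D^+$ with $D^-$; it maps problem \eqref{chap3:piplus} onto \eqref{chap3:pimoins} and the point $(0^-,Y)$ onto $(0^+,-Y)$. Hence $\pi^+(0^-,Y)=\pi^-(0^+,-Y)$, and it suffices to prove $\pi^-(0^+,-Y)>0$.

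Next I would produce an explicit event of positive probability realizing the return to $D^-$. Starting from an interior point $(\ve,-Y+\ve)$ close to the corner, consider the control system $\dot y = -(c_0y+kz)+u$, $\dot z=y$ obtained by replacing $\d w$ with a smooth control $u$. One can steer $(y,z)$ so that $z$ first rises slightly, stays well below $Y$, and then descends back to the level $z=-Y$ with velocity $y<0$; any such controlled path reaches $D^-$ before $D^+$. Because the noise enters the $y$-equation additively, the Stroock--Varadhan support theorem (applied to the interior diffusion up to the first exit from $D$) guarantees that a uniform tube around this controlled path carries positive probability. Letting $\ve\to0$ and using continuity of $\pi^-$ up to the release corner then yields $\pi^-(0^+,-Y)>0$.

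The main obstacle is the behavior at the corner $(0,-Y)$, where the boundary pieces $D^-$ (value $1$) and $D^+$ (value $0$) meet and where the operator degenerates: $A$ is second order only in $y$ and merely transports in $z$. The delicate point is to justify that $\pi^-$ has a well-defined limit as $y\to 0^+$ along $z=-Y$, for which I would invoke the hypoellipticity of $A$ — H\"ormander's condition holds since $\ptl_y$ together with the bracket $[\ptl_y,(c_0y+kz)\ptl_y-y\ptl_z]=c_0\ptl_y-\ptl_z$ already span the plane — so that $\pi^-$ is smooth in $D$ and depends continuously on the release data. The support step itself is comparatively benign, since the noise coefficient is constant and no Stratonovich correction arises. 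An alternative, purely analytic route replaces the probabilistic estimate by Bony's strong maximum principle for degenerate-elliptic operators satisfying H\"ormander's condition: as $0\le\pi^-\le1$ solves $A\pi^-=0$ in $D$ and does not vanish identically (it equals $1$ on $D^-$), $\pi^-$ cannot vanish at any point of $\overline D$ accessible from $D^-$ through the propagation set, and the controllability exhibited above shows the release corner is such a point.
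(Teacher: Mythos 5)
Your proof is built on the same starting point as the paper --- the probabilistic reading of $\pi^{\pm}$ as exit-side probabilities for the elastic (interior) diffusion --- but from there it takes a genuinely different route. The paper exploits the explicit solution formulas for the linear oscillator to show $\pi^-(y,z)\to 1$ as $y\to-\infty$, then runs a one-dimensional maximum-principle/monotonicity argument along the ray $\{z=Y,\ y<0\}$ and a Hopf-type second-derivative contradiction at the corner; it only proves one of the two inequalities, the other following by the $(y,z)\mapsto(-y,-z)$ symmetry that you make explicit. Your route replaces all of this by controllability plus the Stroock--Varadhan support theorem: since the noise is additive in $y$ and the system $\dot y=-(c_0y+kz)+u$, $\dot z=y$ is controllable, a tube around a controlled excursion that returns to $\{z=-Y\}$ with $y<0$ carries positive probability. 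This is more robust (it never uses the explicit solution, only the H\"ormander bracket condition, which you verify correctly) but also heavier machinery; the paper's argument is more elementary and in addition yields monotonicity information about $\pi^-(\cdot,Y)$ that the tube argument does not give.

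One soft spot to repair: the final limiting step. Starting from interior points $(\ve,-Y+\ve)$ and then invoking ``continuity of $\pi^-$ up to the release corner'' is the weakest link, because hypoellipticity only gives smoothness of $\pi^-$ in the open set $D$, not continuity at the corner $(0,-Y)$ where the Dirichlet datum of \eqref{chap3:pimoins} jumps --- and continuity there is essentially what is at stake in defining $\pi^-(0^+,-Y)$ in the first place. The cleaner way to run your own argument is to apply the tube estimate directly from the release points $(y,-Y)$ with $y>0$: the controlled path can be chosen to depend continuously on $y$, the Gronwall comparison between the actual and controlled paths is uniform because the SDE is linear with constant diffusion coefficient, and the tube forces the first return to $\{z=-Y\}$ to occur with $y$ strictly negative while $z$ stays away from $Y$. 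This yields $\inf_{0<y\le y_0}\pi^-(y,-Y)\ge c>0$ and hence $\pi^-(0^+,-Y)\ge c$ without ever discussing boundary regularity at the corner. With that modification the argument is complete; the Bony strong-maximum-principle variant you sketch would likewise need its ``propagation to the corner'' step phrased as a statement about the limit $y\downarrow 0$ rather than about the value at $(0,-Y)$ itself.
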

\begin{proof}
We check only the first property. We consider the elastic process $(y_{yz}(t),z_{yz}(t))$:
\begin{align*}
z_{yz}(t)  & = e^{\frac{-c_0t}{2}} \lbrace z \cos{(\omega t )} + \frac{1}{\omega}(y+\frac{c_0}{2}z) \sin{(\omega t)}\rbrace +  \frac{1}{\omega}  \int_0^t e^{-\frac{c_0}{2}(t-s)} \sin{(\omega(t-s))} \d w(s),\\
y_{yz}(t) & =-\frac{c_0}{2}z_{yz}(t)+ e^{-\frac{c_0 t}{2}} \lbrace -\omega z \sin{(\omega t)} + (y+\frac{c_0}{2}z)\cos{(\omega t)}\rbrace +\int_0^t e^{-\frac{c_0}{2}(t-s)}\cos{(\omega (t-s))} \d w(s)
\end{align*}
where $\omega  := \frac{\sqrt{4k - c_0^2}}{2}$ (we assume $4k >c_0^2$).
Note that the condition $4k >c_0^2$ is needed so that $(y_{yz}(t),z_{yz}(t))$ have real valued solutions.
Set  $\tau_{yz} := \inf \{ t>0, \quad \vert z_{yz}(t) \vert \geq Y \}$
then we have the probabilistic interpretation
$\pi^+(y,z) = \mathbb{P}(z_{yz}(\tau_{yz}) = Y), \quad  \pi^-(y,z) = \mathbb{P}(z_{yz}(\tau_{yz}) = -Y)$.
We can state
\begin{equation}\label{chap3:e10}
\pi^-(y,z) \to 1 \quad \mbox{as} \quad y \to -\infty, \quad z \in [-Y,Y].
\end{equation}
Indeed, $\forall t$ with $0<t<\frac{\pi}{\omega}$ we have $z_{yz}(t) \to - \infty$, as $y \to -\infty$ a.s.
Therefore $\forall t$ with $0< t < \frac{\pi}{\omega}$, a.s. $z_{yz}(t) < -Y$ for $y$ sufficiently large. 
Hence, a.s. $\tau_{yz} < t$ for $y$ sufficiently large.
Therefore a.s. $\limsup_{y \to -\infty} \tau_{yz} < t$.
Since $t$ is arbitrary, necessarily a.s. $\tau_{yz} \to 0$, as $y \to -\infty$
which implies \eqref{chap3:e10}.
Moreover the function $\pi^+(y,Y)$ cannot have a minimum or a maximum at any finite $y <0$. It is then monotone and since $\pi^-(-\infty,Y) = 1$, it is monotone decreasing. It follows that 
$\pi^-(0^-,Y) <1$. It cannot be $0$. Otherwise, $\pi^-(y,Y)$ is continuous at $y=0$, and $(0,Y)$ is a point of minimum of $\pi^-(y,z)$. Since for $y<0$, $\pi_y^-(y,Y) <0$ from the equation of $\pi^-$ we get
$\limsup_{y \to 0} \pi_{yy}^-(y,Y) \leq 0$
which is not possible since $(0,Y)$ is a minimum.
\end{proof}
We next define 
$\eta(y,z)$ by
\begin{equation}\tag{$P_{\eta}$}
-\frac{1}{2} \eta_{yy} + (c_0y + kz)\eta_y - y\eta_z  = f(y,z)  \quad \textup{in} \quad D,\quad
\eta(y,Y)  = 0  \quad \textup{in} \quad D^+,\quad
\eta(y,-Y)  =  0 \quad \textup{in} \quad D^-
\label{chap3:eta}
\end{equation}
with the local boundary conditions $\eta(0^+,Y)=0$ and $\eta(0^-,-Y)=0$.
For $f$ bounded \eqref{chap3:eta} has a unique bounded solution.
Then define $\varphi_+(y;f)$ by solving
\begin{equation}
\label{chap3:e11}
-\frac{1}{2} \varphi_{+,yy} + (c_0 y + k Y) \varphi_{+,y} = f(y,Y), \quad y >0, \quad \varphi_+(0;f) = 0.
\end{equation}
We check easily the formula
\begin{equation}\label{chap3:e12}
\varphi_+(y;f) = 2 \int_0^{\infty} \d \xi \exp(-(c_0 \xi^2 + 2 kY \xi)) \int_{\xi}^{\xi + y} f(\zeta;Y) \exp(-2 c_0 \xi (\zeta - \xi)) \d \zeta, \quad y \geq 0.
\end{equation}
We consider $\psi_+(y,z;f)$ defined by
\begin{equation}\tag{$P_{\psi_+}$}
\left\{
\begin{array}{rcl}
-\frac{1}{2} \psi_{+,yy} + (c_0y + kz)\psi_{+,y} - y\psi_{+,z}  &= &0  \quad \textup{in} \quad D,\\
\psi_+(y,Y)  &= &\varphi_+(y;f)  \quad \textup{in} \quad D^+,\\ 
\psi_+(y,-Y)  &= &0 \quad \textup{in} \quad D^-
\end{array}
\right.
\label{chap3:psiplus}
\end{equation}
and similarly $\varphi_-(y;f), \psi_-(y,z;f)$ defined by
\begin{equation}
\label{chap3:e12bis}
-\frac{1}{2} \varphi_{-,yy} + (c_0 y - k Y) \varphi_{-,y} = f(y,-Y), \quad y <0, \quad \varphi_-(0;f) = 0.
\end{equation}
which leads to
\[
\varphi_-(y;f) = 2 \int_0^{\infty} \d \xi \exp(-(c_0 \xi^2 - 2 kY \xi)) \int_{y-\xi}^{-\xi} f(\zeta;-Y) \exp(-2 c_0 \xi (\zeta - \xi)) \d \zeta, \quad y \leq 0.
\]
and
\begin{equation}\tag{$P_{\psi_-}$}
\left\{
\begin{array}{rcl}
-\frac{1}{2} \psi_{-,yy} + (c_0y + kz)\psi_{-,y} - y\psi_{-,z}  &= &0  \quad \textup{in} \quad D,\\
\psi_-(y,Y) & = & 0  \quad \textup{in} \quad  D^+,\\ 
\psi_-(y,-Y) & = & \varphi_-(y;f) \quad  \textup{in} \quad  D^-\\
\end{array}
\right.
\label{chap3:psimoins}
\end{equation}
We can state 
\begin{proposition}
The solution of \eqref{chap3:vplus} is given by 
\begin{align}\label{chap3:e17}
v^+(y,z;f)  = & \eta(y,z;f) + \psi_+(y,z;f) + \psi_-(y,z;f)\\ 
&  + \frac{\eta(0^-,Y;f) + \psi_+(0^-,Y;f) + \psi_-(0^-,Y;f)}{\pi^-(0^-,Y)} \pi^+(y,z). \nonumber
\end{align}
and the solution of \eqref{chap3:vmoins} is given by
\begin{align}
v^-(y,z;f)  = & \eta(y,z;f) + \psi_+(y,z;f) + \psi_-(y,z;f)\\ 
&  + \frac{\eta(0^+,-Y;f) + \psi_+(0^+,-Y;f) + \psi_-(0^+,-Y;f)}{\pi^+(0^-,Y)} \pi^-(y,z). \nonumber
\end{align}
\end{proposition}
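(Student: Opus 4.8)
The plan is to verify directly that the right-hand side of \eqref{chap3:e17} satisfies every requirement of problem \eqref{chap3:vplus}, exploiting the linearity of $A$, $B_+$, $B_-$ together with the defining properties of the building blocks $\eta$, $\psi_+$, $\psi_-$, $\pi^+$; uniqueness for \eqref{chap3:vplus} then identifies the candidate with $v^+$. Abbreviate $w := \eta + \psi_+ + \psi_- + C\,\pi^+$, where $C := \frac{\eta(0^-,Y) + \psi_+(0^-,Y) + \psi_-(0^-,Y)}{\pi^-(0^-,Y)}$, the denominator being nonzero by the preceding Proposition.

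First I would check the three differential identities. In $D$ one has $A\eta = f$ by \eqref{chap3:eta}, while $A\psi_+ = A\psi_- = 0$ by \eqref{chap3:psiplus}, \eqref{chap3:psimoins} and $A\pi^+ = 0$ by \eqref{chap3:piplus}; hence $Aw = f$ by superposition. On $D^+$ only the $y$-derivatives of the trace along $z=Y$ enter $B_+$: since $\eta(\cdot,Y)=0$, $\psi_-(\cdot,Y)=0$, $\pi^+(\cdot,Y)=1$ there, each of these is annihilated by $B_+$, whereas $\psi_+(\cdot,Y)=\varphi_+(\cdot)$ with $\varphi_+$ solving \eqref{chap3:e11}, i.e.\ $B_+\varphi_+ = f(\cdot,Y)$; thus $B_+ w = f(y,Y)$ on $D^+$. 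The identity $B_- w = f(y,-Y)$ on $D^-$ follows in the same way, now using $\psi_-(\cdot,-Y)=\varphi_-(\cdot)$ and \eqref{chap3:e12bis}, the traces of $\eta$, $\psi_+$, $\pi^+$ on $D^-$ all being zero and hence contributing nothing.

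It remains to handle the two nonlocal boundary conditions, and this is where the value of $C$ is forced. The terminal condition $w(0^-,-Y)=0$ is automatic: at $(0^-,-Y)$ one has $\eta(0^-,-Y)=0$ (the local condition in \eqref{chap3:eta}), $\psi_+(0^-,-Y)=0$ and $\pi^+(0^-,-Y)=0$ (boundary data on $D^-$), and $\psi_-(0^-,-Y)=\varphi_-(0)=0$, so every term vanishes regardless of $C$. The continuity of $w(\cdot,Y)$ across the corner $y=0$ is the genuine constraint: from the plastic side, $w(0^+,Y)=\varphi_+(0)+C=C$ (using $\pi^+(\cdot,Y)=1$ and the vanishing of $\eta,\psi_-$ on $D^+$), while from the interior side $w(0^-,Y)=\eta(0^-,Y)+\psi_+(0^-,Y)+\psi_-(0^-,Y)+C\,\pi^+(0^-,Y)$. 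Imposing equality and substituting $\pi^+(0^-,Y)=1-\pi^-(0^-,Y)$ collapses to $C\,\pi^-(0^-,Y)=\eta(0^-,Y)+\psi_+(0^-,Y)+\psi_-(0^-,Y)$, which is precisely the stated value of $C$. The formula for $v^-$ follows from the mirror-image argument exchanging the roles of $(D^+,+Y,\pi^+)$ and $(D^-,-Y,\pi^-)$, so that the terminal condition $v^-(0^+,Y)=0$ holds automatically and continuity of $v^-(\cdot,-Y)$ at $y=0$ fixes the coefficient of $\pi^-$ through the analogous quotient.

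I expect the main obstacle to lie not in the algebra of superposition but in the justification of the one-sided corner limits $(0^\pm,\pm Y)$: one must ensure that $\eta$, $\psi_\pm$, $\pi^\pm$ extend continuously up to the corner points from the relevant side, and that the traces along $z=\pm Y$ are indeed the limits used above, so that the word ``continuous'' can legitimately be read as the corner-matching identity $w(0^+,Y)=w(0^-,Y)$. Granting the regularity of the building blocks recorded at their definition, together with the strict positivity $\pi^-(0^-,Y)>0$ furnished by the Proposition, the verification is complete, and uniqueness for \eqref{chap3:vplus} yields $w=v^+$.
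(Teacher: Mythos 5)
Your verification is precisely the ``direct checking'' that the paper invokes without writing out: superposition of the building blocks for the identities under $A$, $B_+$, $B_-$, automatic vanishing at $(0^-,-Y)$, and the corner-matching (continuity) condition at $(0,Y)$ forcing the coefficient of $\pi^+$ via $\pi^+(0^-,Y)=1-\pi^-(0^-,Y)$, with Proposition~1 guaranteeing the denominator is nonzero. One minor observation: carried out literally, your mirror argument for $v^-$ yields the denominator $\pi^+(0^+,-Y)$ (which equals $\pi^-(0^-,Y)$ by the $(y,z)\mapsto(-y,-z)$ symmetry), suggesting that the paper's printed denominator $\pi^+(0^-,Y)$ in the second formula is a typo rather than a defect in your reasoning.
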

\begin{proof}
Direct checking. 
\end{proof}

\section{Complete cycle}
First, let us check that $\mathbb{E}[x(t)] =0$ and then $\sigma^2(x(t)) = \mathbb{E}[x^2(t)]$. Indeed, by symmetry of the inequality \eqref{chap3:svi} and by the choice of initial conditions $y(0)=0, z(0)=0$, the processes $(y(t),z(t))$ and $-(y(t),z(t))$ have same law. Then, $\mathbb{E}[x(t)] =\mathbb{E} \left [\int_0^t y(s) \d s \right ] =0$.
In addition, $(y(\tau_1),z(\tau_1))$ is equal to $(0,-Y)$ or $(0,Y)$ with probability $1/2$ for both, therefore with no loss of generality we can suppose that
$(y(0),z(0)) = (0,-Y)$ or $(0,Y)$ with probability $1/2$ for both. Let us treat the case $y(0)=0, z(0)=Y$. So, $\tau_0=0$ and $\theta_1 = \inf \{ t >0, \quad z(t)=-Y, y(t) = 0 \}$.
We can assert that $\mathbb{E}\theta_1 = v^+(0,Y;1)$
hence $\theta_1 < \infty$ a.s..
Next we define $\tau_1 = \inf \{ t > \theta_1, \quad z(t)=Y, y(t) = 0 \}$
then
$\mathbb{E}\tau_1 = v^+(0,Y;1) + v^-(0,-Y;1) = 2 v^+(0,Y;1)$.
At time $\tau_1$ the state of the system is again $(0,Y)$. So the sequence $\{ \tau_n , n \geq 0 \}$ is such that $\tau_n<\tau_{n+1}$ and in the interval $(\tau_n,\tau_{n+1})$ we have a cycle identical to $(0,\tau_1)$. Consider the variable $\int_0^{\tau_1} y(t) \d t$. We have $\mathbb{E} \int_0^{\tau_1} y(t) \d t = \mathbb{E} \int_0^{\theta_1} y(t) \d t + \mathbb{E} \int_{\theta_1}^{\tau_1} y(t)dt = v^+(0,Y;y) + v^-(0,-Y;y)$. However if $f$ is antisymmetric $f(-y,-z) = -f(y,z)$ we have $v^+(0,Y;f) = - v^-(0,-Y;f)$,
therefore $\mathbb{E} \int_0^{\tau_1} y(t) \d t = 0$.
Hence, $\mathbb{E} \left ( \int_0^{\tau_n} y(t) \d t \right )^2 = \mathbb{E} \left ( \sum_{j=0}^{n-1} \int_{\tau_j}^{\tau_{j+1}} y(t) \d t \right )^2 = n \mathbb{E} \left ( \int_0^{\tau_1} y(t) \d t \right )^2$.
Then
\[
\frac{\mathbb{E} \left ( \int_0^{\tau_n} y(t) dt \right )^2}{\mathbb{E} \tau_n} = \frac{\mathbb{E} \left ( \int_0^{\tau_1} y(t) dt \right )^2}{\mathbb{E} \tau_1}. 
\]
Let $T>0$ and $N_T$ with $\tau_{N_T} \leq T < \tau_{N_{T+1}}, \quad N_T = 0 \quad \mbox{if} \quad \tau_1 > T, \quad \tau_0 = 0$ and 
then calculations lead to the following
\[
\frac{\mathbb{E} \left ( \int_0^{\tau_{N_T+1}} y(t) \d t \right )^2}{\mathbb{E} \tau_{N_T+1} } = \frac{\mathbb{E} \left ( \int_0^{\tau_1} y(t) \d t \right )^2}{\mathbb{E} \tau_1}.
\]
Next, we can justify that
\begin{equation}\label{chap3:lim}
\lim_{T \to \infty} \frac{1}{T} \mathbb{E} \left ( \int_0^T y(t) \d t \right )^2 = \lim_{T \to \infty} \frac{1}{T} \mathbb{E} \left ( \int_0^{\tau_{N_T+1}} y(t) \d t  \right )^2
\end{equation}
and that we have  a lower bound and an upper bound for \eqref{chap3:lim}, that is  
\[
\frac{ \mathbb{E} \left ( \int_0^{\tau_1} y(t) \d t \right )^2}{\mathbb{E} \tau_1} \leq \frac{1}{T} \mathbb{E} \left ( \int_0^{\tau_{N_T+1}} y(t) \d t \right )^2 \leq \left ( 1 + \frac{\mathbb{E}\tau_1}{T} \right ) \frac{ \mathbb{E} \left ( \int_0^{\tau_1} y(t) \d t \right )^2}{\mathbb{E} \tau_1}. 
\]
Therefore
\begin{equation}\label{chap3:e38}
\lim_{T \to \infty} \frac{1}{T} \mathbb{E} \left ( \int_0^T y(t) \d t \right )^2 = \frac{\mathbb{E} \left ( \int_0^{\tau_1} y(t) \d t \right )^2}{\mathbb{E} \tau_1}.
\end{equation}
Moreover, we can simplify \eqref{chap3:e38}, indeed we have
\[
\mathbb{E} \left ( \int_0^{\tau_1} y(t) \d t \right )^2 = \mathbb{E} \left [ \left ( \int_0^{\theta_1} y(t) \d t \right )^2 + \left ( \int_{\theta_1}^{\tau_1} y(t) \d t \right )^2 + 2\int_0^{\theta_1} y(t) \d t \int_{\theta_1}^{\tau_1} y(t) \d t \right ]
\]
and therefore, we can justify that
\[
\mathbb{E} \left ( \int_0^{\tau_1} y(t) \d t \right )^2 = 2 \left [ \mathbb{E} \left ( \int_0^{\theta_1} y(t) \d t \right )^2 - (\int_0^{\theta_1} y(t) \d t)^2 \right ] = 2 \left [ \mathbb{E} \left ( \int_0^{\theta_1} y(t) \d t \right )^2 - (v^+(0,Y;y))^2 \right ] .
\]
Since  $\mathbb{E} \tau_1 = 2 v^+(0,Y;1)$
we obtain
\begin{equation}
\label{chap3:last}
\lim_{T \to \infty} \frac{1}{T} \mathbb{E} \left ( \int_0^T y(t) \d t \right )^2 = \frac{\mathbb{E} \left ( \int_0^{\theta_1} y(t) \d t \right )^2-(v^+(0,Y;y))^2}{v^+(0,Y;1)}.
\end{equation}
\section{Numerical evidence in support of our result}
In this section, we provide computational results which confirm our theoretical results. 
A C code has been written to simulate $(y(t),z(t))$. See \cite{BenMerPiTu} for the numerical scheme considered to do direct simulation.
Let $T>0$, $N \in \mathbb{N}$ and $\{ t_n=n \delta t, 0 \leq n \leq N \}$ where $\delta t= \frac{T}{N}$. 
Then, to compute the left hand side of \eqref{chap3:sigma}, we consider $MC \in \mathbb{N}$ and we generate $MC$ numerical solutions of \eqref{chap3:svi} $\{ y^i(t),0 \leq t \leq T,  1 \leq i \leq MC \}$ up to the time $T$. By the law of large numbers, we can approximate $\frac{1}{T} \mathbb{E} \left ( \int_0^T y(s) ds \right )^2$ by $X_{MC}:= \frac{1}{T}\frac{1}{MC} \sum_{i=1}^{MC} \left ( \sum_{i=1}^{N} y^i(t_n) \delta t  \right )^2$ and also
$\frac{1}{T^2} \mathbb{E} \left ( \int_0^T y(s) ds \right )^4$ by $X_{MC}^2:= \frac{1}{T^2}\frac{1}{MC} \sum_{i=1}^{MC} \left ( \sum_{i=n}^{N} y^i(t_n) \delta t  \right )^4$.
Denote $s_X := \sqrt{X_{MC}^2- (X_{MC})^2}$, we also know by the central limit theorem that $\frac{1}{T} \mathbb{E} \left ( \int_0^T y(s) ds \right )^2 \in [X_{MC} - \frac{1.96 s_X}{\sqrt{MC}}, X_{MC} + \frac{1.96 s_X}{\sqrt{MC}}]$ with $95 \%$ of confidence. Similarly, to compute the right hand side of \eqref{chap3:sigma}, we generate $MC$ numerical long cycles. For each trajectory $\{ y^i(t),t \geq 0 \}$, we consider $N_c^i$ the required number of time step to obtain a completed cycle. Denote $\delta_{MC} := \frac{1}{MC} \sum_{i=1}^{MC} \left ( \sum_{n=0}^{N_c^i} y^i(t_n) \delta t \right )^2$, $\delta_{MC}^2 := \frac{1}{MC} \sum_{i=1}^{MC} \left ( \sum_{n=0}^{N_c^i} y^i(t_n) \delta t \right )^4$, $\tau_{MC} := \frac{1}{MC} \sum_{i=1}^{MC}{ N_c^i \delta t}$ , $\tau_{MC}^2 := \frac{1}{MC} \sum_{i=1}^{MC}{ (N_c^i \delta t)^2}$, $s_\delta := \sqrt{\delta_{MC}^2- (\delta_{MC})^2}$ and $s_\tau := \sqrt{\tau_{MC}^2- (\tau_{MC})^2}$. We also know that $\frac{\delta_{MC}}{\tau_{MC}} \in [\frac{\delta_{MC} - \frac{1.96 s_\delta}{\sqrt{MC}}}{\tau_{MC} + \frac{1.96 s_\tau}{\sqrt{MC}}}, \frac{\delta_{MC} + \frac{1.96 s_\delta}{\sqrt{MC}}}{\tau_{MC} - \frac{1.96 s_\tau}{\sqrt{MC}}}]$ with $95 \%$ of confidence. In table \ref{chap3:tablesigma}, is shown a comparison of the results obtained for $X_{MC}$ and $\frac{\delta_{MC}}{\tau_{MC}}$ where $T = 500$ , $\delta t = 10^{-4}$ and $MC = 5000$. 
\begin{table}
\begin{center}
\begin{tabular}{|c||l|l|l|c|}
\hline
\multicolumn{5}{|c|}{$c_0 = 1, k = 1$} \\ \hline
$Y$&   $X_{MC}, T = 500$  &   $\frac{\delta_{MC}}{\tau_{MC}}$ & $\tau_{MC}$  &   $\mbox{Relative error}$ $\%$ \\ \hline
0.1  & $0.807^{\pm 0.031}$ & $0.834^{\pm 0.069}$ &  $6.61^{\pm 0.11}$ & $3.2$ \\ \hline
0.2  & $0.649^{\pm 0.026}$ & $0.624^{\pm 0.047}$ & $8.74^{\pm 0.13}$    & $3.8$ \\ \hline
0.3  & $0.493^{\pm 0.020}$ & $0.464^{\pm 0.034}$ & $10.45^{\pm 0.16}$  & $5.8$ \\ \hline
0.4  & $0.361^{\pm 0.014}$ & $0.355^{\pm 0.026}$ & $12.12^{\pm 0.18}$  & $1.7$ \\ \hline
0.5  & $0.266^{\pm 0.011}$ & $0.257^{\pm 0.019}$ & $13.80^{\pm 0.21}$  & $3.3$ \\ \hline
0.6  & $0.195^{\pm 0.008}$ & $0.198^{\pm 0.014}$ & $16.15^{\pm 0.26}$  & $1.5$ \\ \hline
0.7  & $0.137^{\pm 0.005}$ & $0.149^{\pm 0.011}$ & $18.84^{\pm 0.31}$  &  $8$ \\ \hline
0.8  & $0.103^{\pm 0.004}$ & $0.112^{\pm 0.008}$ & $22.80^{\pm 0.39}$  &  $8$ \\ \hline
0.9  & $0.071^{\pm 0.003}$ & $0.086^{\pm 0.006}$ & $26.79^{\pm 0.47}$  & $15$ \\ \hline
\end{tabular}
\caption{Monte-Carlo simulations to compare numerical solution of the left and right hand sides of \eqref{chap3:sigma}, $T = 500$ , $\delta t = 10^{-4}$ and $MC = 5000$.}
\label{chap3:tablesigma}
\end{center}
\end{table}

\begin{remark}
From a numerical point of view, the behavior in long cycles is very relevant. Indeed, computing the left hand side of \eqref{chap3:sigma} is much more expensive in time compared to the right hand side (see the $\tau_{MC}$-column of table \ref{chap3:tablesigma}). 
\end{remark}

\end{document}